\newcommand{\bR}{\mathbb{R}}
\newcommand{\cE}{\mathcal{E}}
\newcommand{\cJ}{\mathcal{J}}
\newcommand{\cL}{\mathcal{L}}
\newcommand{\cV}{\mathcal{V}}
\newcommand{\fA}{\,\forall\,}
\newcommand{\dS}{\displaystyle}
\newcommand{\pp}{\partial}
\newcommand{\ppx}[1]{{\frac{\partial }{\partial #1}}}
\title{\LARGE \bf
Monotone Order Properties for Control of \\ Nonlinear Parabolic PDE on Graphs
}
\author{Sidhant Misra, Marc Vuffray, Anatoly Zlotnik and Michael Chertkov
\thanks{S. Misra, M. Vuffray, A. Zlotnik and M. Chertkov are with Los Alamos National Laboratory, Los Alamos, NM 87545.  \qquad
Email: \{ sidhant $\mid$ vuffray $\mid$ azlotnik $\mid$ chertkov \}@lanl.gov.}}
\newtheorem{theorem}{Theorem}
\newtheorem{lemma}{Lemma}
\newtheorem{corollary}{Corollary}
\newtheorem{definition}{Definition}
\newtheorem{assumption}{Assumption}
\begin{document}

\maketitle

\begin{abstract}
We derive conditions for the propagation of monotone ordering properties for a class of nonlinear parabolic partial differential equation (PDE) systems on metric graphs.  For such systems, PDEs with a general nonlinear dissipation term define evolution on each edge, and balance laws create Kirchhoff-Neumann boundary conditions at the vertices.  Initial conditions, as well as time-varying parameters in the coupling conditions at vertices, provide an initial value problem (IVP).  We first prove that ordering properties of the solution to the IVP are preserved when the initial conditions and time-varying coupling law parameters at vertices are appropriately ordered.  Then, we prove that when monotone ordering is not preserved, the first crossing of solutions occurs at a graph vertex.  We consider the implications for robust optimal control formulations and real-time monitoring of uncertain dynamic flows on networks, and discuss application to subsonic compressible fluid flow with energy dissipation on physical networks.
\end{abstract}


\section{Introduction} \label{secintro}

The preservation of monotone order propagation (MOP) properties in dynamical systems has been extensively investigated in the context of ordinary differential equation theory \cite{kamke32,hirsch85,smith88,hirsch05}.  The recent discovery of numerous applications has renewed interest in such systems, for example to vehicle routing under uncertainty \cite{como10}, analysis of chemical reaction networks \cite{deleenheer04}, as well as power systems and turbulent jet flows \cite{budivsic12}.  The notion of monotone control systems \cite{angeli03,angeli04} has also facilitated stability analysis for systems with MOP properties \cite{lovisari14}, and enabled robust control in applications including automation of building ventilation systems \cite{meyer13}.  Several results on the propagation of order properties for stochastic systems exist as well \cite{sootla15a}.

Previous studies on monotone dynamical systems have largely focused on MOP properties of ordinary differential equations (ODEs) \cite{hirsch05}, and applications involving representations of fluid flow or the aggregated motion of discrete particles were examined with ODE models \cite{hirsch05,como13a}.  However, control and optimization approaches to systems represented by PDE dynamics could benefit significantly from monotone systems concepts, in particular control of fluid flows on networks \cite{steinbach07pde,zlotnik15cdc} and quantum graphs \cite{arioli16}.  While recent results have used monotonicity properties to optimize fluid flows over networks using set-theoretic and variational approaches \cite{vuffray15cdc,dvijotham15}, these focused on the steady-states of the flow equations.  These studies demonstrate that the steady-states have a monotone ordering with respect to certain input parameters.  Crucially, this property was shown to enable significant simplification of robust optimization formulations, in particular for distributed flows on large-scale networks.

The need to develop robust optimal control formulations for emerging applications involving uncertain dynamic flows on networks motivates investigation of MOP properties for PDEs.
The approximation of a diffusive PDE operator by an ODE system and derivation of MOP properties using the established ODE theory has been suggested for basic reaction-diffusion problems \cite{deleenheer04,enciso06}.  Otherwise, monotone operators have been examined primarily in the context of existence and approximations of solutions to nonlinear PDE systems \cite{quaas08,briani12,showalter13}.  Recently, conditions for MOP properties were derived for actuated dynamic commodity flows through networks 
\cite{zlotnik16ecc}.  The notion of a monotone parameterized control system was introduced, and MOP properties were shown to facilitate efficient formulation of robust optimal control problems with uncertainty in nodal commodity withdrawals.  Lumped-element approximation was used to express the dissipative PDEs on network edges as ODE systems, to which existing MOP theory was applied.  However, no {\em ab initio} analysis of MOP properties of PDE systems on graphs has been performed to date.

In this manuscript, we derive several results on the propagation of monotone order properties for systems of nonlinear parabolic PDEs on metric graphs.  Specifically, PDEs with a general nonlinear dissipation term define state evolution on each edge, and balance laws create Kirchhoff-Neumann boundary conditions at the vertices.  We first suppose that initial conditions, together with time-varying parameters that characterize coupling conditions at vertices, provide a well-posed initial value problem (IVP).  Our main result is a theorem establishing preservation of monotone ordering properties of the solution to the IVP when the initial conditions and time-varying coupling law parameters at vertices are appropriately ordered.  Furthermore, we prove that when monotone ordering is not preserved, the first crossing of solutions occurs at a graph vertex.

The manuscript is organized as follows.  In Section \ref{sec:formulation}, we formulate a class of nonlinear parabolic PDE systems defined on a collection of domains that form a metric graph when coupled by nodal Kirchhoff-Neumann boundary conditions, and state the main results given the required assumptions.  Section \ref{sec:proof} contains the formal proofs of the main results on monotone order propagation and crossing point condtions for solutions to the PDE system.  Then, implications for formulating robust optimal control problems and monitoring policies for uncertain dynamic flows on networks are discussed in Section \ref{sec:application}, followed by a review of applications to subsonic compressible fluid flow with energy dissipation on physical networks.  We summarize our conclusions in Section \ref{sec:conc}.


\vspace{-1ex}
\section{Parabolic PDE Systems on Metric Graphs} \label{sec:formulation}

We consider a metric graph $\Gamma=\left(\cV,\cE,\lambda\right)$ where $\cV$ is the set of vertices and $\cE\subset \cV \times \cV$ is the set of directed  edges $(i,j)\in\cE$ that connect the vertices $i,j\in\cV$.  Here $\lambda:\cE\to\bR_+$ is a metric on the edges, where $\bR_+$ denotes the non-negative real numbers. Let the incoming and outgoing neighborhoods of $j\in \cV$ be denoted by $\partial_{+}j$ and $\partial_{-}j$, respectively.  These sets are defined as
\begin{align}
\partial_{+}j&=\left\{ i\in \cV\mid(i,j)\in \cE\right\} \\
\partial_{-}j&=\left\{ k\in \cV\mid(j,k)\in \cE\right\}.
\end{align}
Every edge $(i,j)\in \cE$ is associated with a spatial dimension on the interval $I_{ij}=[0,L_{ij}]$, where $L_{ij}=\lambda(i,j)>0$ is interpreted as the edge length defined by the metric $\lambda$.  We let $V=|\cV|$ and $E=|\cE|$ denote the number of vertices and of edges, respectively.

The state of the network system is characterized within each edge $(i,j)\in\cE$  by space-time dependent variables corresponding to flow $\phi_{ij}:[0,T]\times I_{ij}\rightarrow\mathbb{R}$ and non-negative density $\rho_{ij}:[0,T]\times I_{ij}\rightarrow\mathbb{R}_{+}$. In addition, every vertex $i\in \cV$ is associated with a time-dependent {internal} nodal density $\rho_{i}(t):[0,T]\rightarrow\mathbb{R}_{+}$ and is subject to
a time-dependent flow injection $q_{i}:[0,T]\rightarrow\mathbb{R}$.

We suppose that the density and flow dynamics on the edge  $(i,j)\in\cE$ evolve according to the generalized dissipative relations
\begin{align}
\dS \pp_t\rho_{ij}(t,x_{ij})+\pp_x\phi_{ij}(t,x_{ij}) & =  0 \label{eq:in_continuity} \\
\phi_{ij}(t,x_{ij})+f_{ij}(t,\rho_{ij}(t,x_{ij}), \partial_{x}\rho_{ij}(t,x_{ij})) & =0,  \label{eq:in_dissipation_eq}
\end{align}
which are called respectively the continuity and momentum dissipation equations.  


Next, we establish nodal relations that characterize the boundary conditions for the flow dynamics \eqref{eq:in_continuity}-\eqref{eq:in_dissipation_eq} on each edge of the graph.  For this purpose, in order to simplify notation we define
\begin{align}
\underline{\rho}_{ij}(t)\triangleq\rho_{ij}(t,0), \quad \overline{\rho}_{ij}(t)\triangleq\rho_{ij}(t,L_{ij}), \label{eq:end_p_def} \\
\underline{\phi}_{ij}(t)\triangleq\phi_{ij}(t,0), \quad \overline{\phi}_{ij}(t)\triangleq\phi_{ij}(t,L_{ij}). \label{eq:end_q_def}
\end{align}
At each vertex $i\in V$ the flow and density values at the endpoints of adjoining edges must satisfy certain compatibility conditions.  First, a Kirchhoff-Neumann property of flow conservation is ensured through nodal continuity equations
\begin{align}
q_j(t)+\sum_{i\in\partial_{+}j}\overline{\phi}_{ij}- \sum_{k\in\partial_{-}j}\underline{\phi}_{jk}=0, \quad \fA j\in\cV. \label{eq:in_nodal_continuity}
\end{align}
In addition, we include compatibility conditions that relate nodal densities to boundary conditions on edges.  
For each edge $(i,j)\in\cE$, the corresponding nodal conditions are
\begin{align}
\underline{\rho}_{ij}(t) = \underline{\alpha}_{ij} (t,\rho_{i}(t)), \quad
\overline{\rho}_{ij}(t)  = \overline{\alpha}_{ij}(t,\rho_{j}(t)), \label{eq:in_pressure_comp}
\end{align}
where the compatibility functions $\underline{\alpha}_{ij}(t,\rho)$ and $ \overline{\alpha}_{ij}(t,\rho)$ are monotonically increasing functions in $\rho$ for all $t\in[0,T]$ and $\rho>0$.
The functions $\rho_{i}$ are auxiliary variables that denote internal nodal density values.  The above compatibility conditions are visualized in Figure \ref{fig:compatibility}.

We suppose that instantaneous state of the system at time $t=0$ is specified by initial density and flow profiles
\begin{align}
\!\!\! \rho_{ij}(0,x)=\rho_{ij}^{0}(x), \,\, \phi_{ij}(0,x)=\phi_{ij}^{0}(x), \quad \fA (i,j)\in\cE. \label{eq:in_initial_condition}
\end{align}

\begin{figure}
\centering
\includegraphics[width=.95\linewidth]{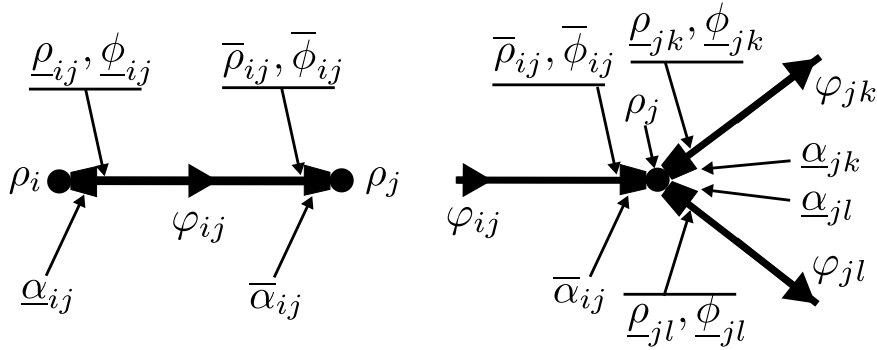}
 \caption{Nodal densities $\rho_j$ and boundary variables $\underline{\rho}_{ij}$, $\underline{\phi}_{ij}$, $\overline{\rho}_{ij}$, and $\overline{\phi}_{ij}$, and compatibility functions $\underline{\alpha}_{ij}$ and $\overline{\alpha}_{ij}$
for an edge (left) and a joint (right).
} \label{fig:compatibility}
\end{figure}

%

\begin{assumption}
 \label{assume}
We make the following assumptions on initial value problem \eqref{eq:in_continuity}-\eqref{eq:in_initial_condition} that describes the coupled network flow dynamics with initial conditions.
\begin{itemize}
\item[(i)] \emph{Well-posedness and regularity of initial conditions:} There exists an integer $k\geq 2$ such that $\rho_{ij}^{0},\,\phi_{ij}^{0}\in C^k([0,L_{ij}])$ for all $(i,j)\in\cE$. Moreover the coupling constriants \eqref{eq:in_nodal_continuity} and \eqref{eq:in_pressure_comp} hold at $t=0$.
\item[(ii)] \emph{Continuity of inputs and control:} The compatibility functions satisfy $\underline{\alpha}_{ij},\,\overline{\alpha}_{ij}\in C_+^k([0,T]\times\bR_+)$ for all $(i,j)\in\cE$,
and the nodal parameter functions satisfy $q_i\in C^k([0,T])$ for all $i\in\cV$.
\item[(iii)] \emph{Well-posedness of coupled network dynamics:} The initial value problem consisting of the coupled network flow dynamics with the initial conditions in \eqref{eq:in_continuity}-\eqref{eq:in_initial_condition}, along with given
compatibility functions $\overline{\alpha}_{ij}$ and $\underline{\alpha}_{ij}$, admits a unique classical solution that is twice continuously differentiable.
\item[(iv)] \emph{Stability under small perturbations:} Let $\rho_{ij}(t,x_{ij}), \phi_{ij}(t,x_{ij})$ for $(i,j) \in \cE$ be the unique classical solution to \eqref{eq:in_continuity}-\eqref{eq:in_initial_condition}. Let
$\rho_{ij,\epsilon}(t,x_{ij})$ and $\phi_{ij,\epsilon}(t,x_{ij})$ for all $(i,j) \in \cE$ be a solution to the perturbed system
\begin{align}
\!\!\!\!\!\!\!\!\!   \dS \pp_t\rho_{ij,\epsilon}(t,x_{ij})\!+\!\pp_x\phi_{ij,\epsilon}(t,x_{ij})  - \epsilon & =  0,  \label{perturbed1} \\
\!\!\!\!\!\!\!\!\!  \phi_{ij,\epsilon}(t,x_{ij})\!+\!f_{ij}(t,\rho_{ij,\epsilon}(t,x_{ij}), \partial_{x}\rho_{ij,\epsilon}(t,x_{ij}))& =0,  \label{perturbed2}
\end{align}
with the perturbed initial conditions
\begin{align}
\!\!\! \rho_{ij,\epsilon}(0,x)=\rho_{ij}^{0}(x) + \epsilon, \quad \phi_{ij,\epsilon}(0,x)=\phi_{ij}^{0}(x)  \label{eq:in_initial_condition_pert}
\end{align}
for all $(i,j)\in\cE$.
Then as $\epsilon \rightarrow 0$, the perturbed solution converges point-wise to the original solution, i.e., for all $(i,j) \in \cE$, $\ x_{ij} \in I_{ij}$  and  $t \in [0,T]$, we have
\begin{align}
\lim_{\epsilon \rightarrow 0} \rho_{ij,\epsilon}(t,x_{ij}) = \rho_{ij}(t,x_{ij}).
\end{align}
\end{itemize}
\end{assumption}



\vspace{0.1in}

\begin{theorem} \label{theorem}
Suppose the initial value problem described in \eqref{eq:in_continuity}-\eqref{eq:in_initial_condition} satisfies Assumption \ref{assume}. Also suppose that the dissipation function
$f_{ij}(t,u,v)$ is strictly increasing in the third argument $v$ for all $(i,j) \in \cE$. Let $\rho_{ij}^{(1)}(0,x_{ij})$ and  $\rho_{ij}^{(2)}(0,x_{ij})$ be two initial conditions that satisfy $\rho_{ij}^{(1)}(0,x_{ij}) \geq \rho_{ij}^{(2)}(0,x_{ij})$ for  all $(i,j) \in \cE$, $x_{ij} \in I_{ij}$. Let $\mathcal{S} \subseteq \cV$ be an arbitrary subset of $\cV$. Let $t_0 \in [0,T]$ and suppose that for all $i \in \mathcal{S}$ we have that $q_i^{(1)}(t) \geq q_i^{(2)}(t)$
for all $t \in [0,t_0]$ and for all $i \in \cV \setminus \mathcal{S}$ we have that $\rho_i^{(1)}(t) \geq \rho_i^{(2)}(t)$ for all $t \in [0,t_0]$. Then  the densities in the system satisfy $\rho_{ij}^{(1)}(t,x_{ij}) \geq \rho_{ij}^{(2)}(t,x_{ij})$
for all $(i,j) \in \cE$, $x_{ij} \in I_{ij}$ and $t \in [0,t_0]$.

\end{theorem}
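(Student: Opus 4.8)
The plan is to eliminate the flow variable and reduce the edge dynamics to a single scalar equation, after which the whole statement becomes a parabolic comparison principle whose only non-routine content is the treatment of the Kirchhoff--Neumann vertices. Substituting the momentum relation \eqref{eq:in_dissipation_eq} into the continuity equation \eqref{eq:in_continuity} gives, on each edge $(i,j)\in\cE$, the quasilinear equation $\pp_t\rho_{ij}=\pp_x f_{ij}(t,\rho_{ij},\pp_x\rho_{ij})$. Writing $f_{ij}=f_{ij}(t,u,v)$ and expanding the spatial derivative, the coefficient of $\pp_{xx}\rho_{ij}$ is $\pp_v f_{ij}$, which is strictly positive by the hypothesis that $f_{ij}$ is strictly increasing in $v$; together with continuity on the compact space--time domain and the regularity afforded by Assumption \ref{assume}, this makes the equation uniformly parabolic. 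This parabolicity is precisely what puts a maximum principle at my disposal.

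Next I would form the difference $w_{ij}=\rho_{ij}^{(1)}-\rho_{ij}^{(2)}$ on each edge. Subtracting the two scalar equations and applying the fundamental theorem of calculus in the arguments $u,v$ of $f_{ij}$ yields a linear equation $\pp_t w_{ij}=a_{ij}\pp_{xx}w_{ij}+b_{ij}\pp_x w_{ij}+c_{ij}w_{ij}$, where $a_{ij}=\int_0^1\pp_v f_{ij}\,\rd s>0$ and $b_{ij},c_{ij}$ are bounded coefficients built from the derivatives of $f_{ij}$ along the segment joining the two solutions. After absorbing the sign-indefinite zeroth-order term through the substitution $w_{ij}=e^{\lambda t}\tilde w_{ij}$ with $\lambda$ large, the weak parabolic maximum principle applies edgewise: since the top $t=t_0$ is not part of the parabolic boundary, a first downcrossing of $w_{ij}$ through zero cannot occur in the interior, and since $w_{ij}(0,\cdot)\geq 0$ it cannot occur at the initial time either. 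Hence any loss of ordering must first appear at an edge endpoint, i.e. at a graph vertex---which is also the mechanism behind the companion crossing-point statement.

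At a vertex $j\in\cV\setminus\cS$ the hypothesis $\rho_j^{(1)}\geq\rho_j^{(2)}$, combined with the monotonicity of the compatibility maps $\underline{\alpha}_{ij},\overline{\alpha}_{ij}$ in their density argument, transfers directly through \eqref{eq:in_pressure_comp} to $w\geq 0$ at the corresponding endpoints, so no crossing originates there. The substantive case is a flux-controlled vertex $j\in\cS$, and here I would run a first-crossing argument sharpened by the perturbation device of Assumption \ref{assume}(iv). Replacing $\rho^{(1)}$ by the perturbed solution $\rho^{(1)}_\epsilon$ makes $w_\epsilon$ start strictly positive and satisfy the same linear equation with an added positive source $\epsilon$; at an interior zero one would have $\pp_t w_\epsilon\leq 0$ while $a_{ij}\pp_{xx}w_\epsilon+\epsilon\geq\epsilon>0$, a contradiction, so $w_\epsilon>0$ in the interior of every incident edge up to the first time $t^\ast$ at which $w_\epsilon$ vanishes at $j$. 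Because $w_\epsilon>0$ in the interior and $w_\epsilon=0$ at the $j$-endpoint, Hopf's lemma forces a strict inward normal derivative there, giving $\pp_x\rho^{(1)}_\epsilon>\pp_x\rho^{(2)}$ on outgoing edges (at $x=0$) and $\pp_x\rho^{(1)}_\epsilon<\pp_x\rho^{(2)}$ on incoming edges (at $x=L_{ij}$), while $\rho^{(1)}_\epsilon=\rho^{(2)}$ at the vertex itself. The density-controlled endpoints remain ordered under this perturbation since it only adds a nonnegative source and initial shift to solution~$1$.

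Finally I would convert these derivative inequalities into flux inequalities and contradict the balance law. Since $\phi_{ij}=-f_{ij}(t,\rho_{ij},\pp_x\rho_{ij})$, $\rho^{(1)}_\epsilon=\rho^{(2)}$ at $j$, and $f_{ij}$ is strictly increasing in $v$, the Hopf inequalities give $\underline{\phi}_{jk}^{(1)}<\underline{\phi}_{jk}^{(2)}$ on outgoing edges and $\overline{\phi}_{ij}^{(1)}>\overline{\phi}_{ij}^{(2)}$ on incoming edges at $t^\ast$. Subtracting the two nodal continuity equations \eqref{eq:in_nodal_continuity} at $j$ then gives $(q_j^{(1)}-q_j^{(2)})+\sum_{i\in\partial_+ j}(\overline{\phi}_{ij}^{(1)}-\overline{\phi}_{ij}^{(2)})-\sum_{k\in\partial_- j}(\underline{\phi}_{jk}^{(1)}-\underline{\phi}_{jk}^{(2)})=0$, whose first term is $\geq 0$ by $q_j^{(1)}\geq q_j^{(2)}$ and all of whose remaining terms are strictly positive (at least one is present, since a first crossing sits on an edge endpoint), forcing a strictly positive left-hand side---a contradiction. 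Thus $w_\epsilon>0$ throughout $[0,t_0]$, and letting $\epsilon\to 0$ via the pointwise convergence in Assumption \ref{assume}(iv) yields $\rho_{ij}^{(1)}\geq\rho_{ij}^{(2)}$. The hard part is exactly this vertex step: the weak maximum principle gives no information at flux-controlled vertices, and it is the interlocking of Hopf's lemma, strict monotonicity of $f_{ij}$ in its gradient argument, and the sign structure of the Kirchhoff--Neumann law---regularized by the $\epsilon$-perturbation to rule out a degenerate tangential touching---that closes the argument.
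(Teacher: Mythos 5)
Your proposal is correct in substance and shares the paper's skeleton---the $\epsilon$-perturbation device of Assumption~\ref{assume}(iv), exclusion of a first crossing in edge interiors, a contradiction with the Kirchhoff--Neumann balance \eqref{eq:in_nodal_continuity} at a vertex, and a final limit $\epsilon\to 0$---but it implements the two key steps with genuinely different machinery. Where you linearize the difference $w_\epsilon=\rho^{(1)}_\epsilon-\rho^{(2)}$ via the fundamental theorem of calculus and invoke the weak parabolic maximum principle plus Hopf's lemma, the paper argues pointwise at a first crossing point: equal values and first spatial derivatives, ordered second derivatives, and the continuity equation with the $+\epsilon$ source yield an immediate contradiction with monotonicity of $f_{ij}$ in $v$ (Lemma~\ref{lem:perturbed_interior}); at a vertex it splits into a tangential case (handled exactly as in the interior) and a transversal case $\partial_x \rho^{(1)}_\epsilon > \partial_x \rho^{(2)}$ at the endpoint, which feeds the strict increase of $f_{ij}$ in $v$ directly into a flux inequality---no Hopf lemma needed (Lemma~\ref{lem:perturbed_vertex}). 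The trade-off is real: the paper's pointwise argument survives degeneracy of $\partial_v f_{ij}$ (strict increase in $v$ only guarantees $\partial_v f_{ij}\geq 0$, and the crossing-point contradiction still goes through if it vanishes at the touching point), whereas your Hopf step requires the linearized diffusion coefficient $\int_0^1 \partial_v f_{ij}\,ds$ to be bounded away from zero near the vertex, i.e.\ uniform parabolicity, which is a tacit strengthening of the stated hypothesis; in exchange, your route rests on classical theory and would extend more readily to weaker solution concepts. Two further points. First, to apply Hopf on \emph{every} edge incident to $j$ you need $w_\epsilon$ to vanish at the $j$-endpoint of all incident edges simultaneously; your phrase ``$w_\epsilon$ vanishes at $j$'' silently uses the fact that the monotone compatibility maps in \eqref{eq:in_pressure_comp} are invertible, which is exactly the content of the paper's Lemma~\ref{lem:nodal_cross_equiv} and deserves an explicit line. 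Second, your sign bookkeeping at the vertex ($\overline{\phi}^{(1)}_{ij}>\overline{\phi}^{(2)}_{ij}$ on incoming edges, $\underline{\phi}^{(1)}_{jk}<\underline{\phi}^{(2)}_{jk}$ on outgoing ones) is the one consistent with \eqref{eq:in_nodal_continuity} as written; the paper's Lemma~\ref{lem:perturbed_vertex} states both flux inequalities in the same sense and compensates with a sign change in the balance law, so your version is in fact the cleaner accounting.
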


\vspace{0.1in}
Observe that that no assumption is made in Theorem~{\ref{theorem}} regarding the nodal parameter functions $q_i^{(1)}(t)$ and  $q_i^{(2)}(t)$ for nodes $i\in V\setminus \mathcal{S}$. The statement implies that if we start with two initial conditions that satisfy a certain
ordering, then regardless of the nodal parameter functions, if this ordering is ever violated during the course of the evolution of the system, the violation must first occur at one of the vertices $i\in V \setminus \mathcal{S}$ of the network. As a consequence, if the nodal density values $\rho_{i}(t)$ for nodes $i \in V \setminus \mathcal{S}$ also satisfy the same ordering, then the ordering of the initial conditions is preserved throughout the evolution of the system.

\section{Proof of Main Result} \label{sec:proof}
\subsection{Crossing Points}
The proof is constructed by establishing the non-existence of the so-called ``first crossing point". We formalize this definition below.
\begin{definition} \label{def:first_crossing}
 Let $\rho_{ij}^{(1)}(t,x_{ij})$ and $\rho_{ij}^{(2)}(t,x_{ij})$ be the unique classical solutions corresponding to the initial conditions $\rho_{ij}^{(1)}(0,x_{ij})$ and  $\rho_{ij}^{(2)}(0,x_{ij})$ and injections $q^{(1)}_i(t)$ and $q^{(2)}_i(t)$
 respectively. Further suppose that for all $(i,j) \in \cE$ and for all $x_{ij} \in I_{ij}$ we have $\rho_{ij}^{(1)}(0,x_{ij}) \geq \rho_{ij}^{(2)}(0,x_{ij})$. Then a tuple $(t_c, x_c)$, where $t_c \in (0,T]$ and $x_c \in I_{ij}$ for some $(i,j) \in \cE$ is called a first crossing point if
 \begin{align}
 t_c = \sup \{t \in &[0,T] \ : \ \rho_{ij}^{(1)}(t,x_{ij}) \geq \rho_{ij}^{(2)}(t,x_{ij}) \nonumber  \\
 & \fA (i,j) \in \cE, \, x_{ij} \in I_{ij}  \}, \label{eq:crossing_time}
 \end{align}
 and, there exists a $\delta > 0$, such that
 \begin{align}
 \rho_{ij}^{(1)}(t,x_{c}) < \rho_{ij}^{(2)}(t,x_{c}),  \label{eq:positive_derivative}
 \end{align}
 for all $t \in (t_c, t_c + \delta)$.
\end{definition}

First crossing points need not be unique because there may be multiple coordinates $x_c$ that satisfy the above definition. The crossing time $t_c$ however, is unique by definition. Note that whenever there is no crossing point in the system dynamics until some time $t_0$, then the ordering of the initial conditions must be preserved till $t_0$. In the rest of the section, we prove the appropriate non-existence of first crossing points in order to establish Theorem~\ref{theorem}.

\subsection{Technical Lemmas}
In this section, we prove several technical lemmas that will be useful to establish Theorem~\ref{theorem}. Let $\rho_{ij, \epsilon}^{(1)}(t,x_{ij})$ be the solution to the perturbed system in  \eqref{perturbed1}-\eqref{perturbed2}
with nodal input parameters set at $q_{i}^{(1)}(t)$. The following lemmas prove that there can be no crossing point of the perturbed solution $\rho_{ij, \epsilon}^{(1)}(t,x_{ij})$ and $\rho_{ij}^{(2)}(0,x_{ij})$ either in the interior of an edge or at a vertex $i \in \cV$ where $q_i^{(1)}(t) \geq q_i^{(2)}(t)$.

\begin{lemma} \label{lem:perturbed_interior}
Let $(i,j) \in \cE$ be an edge. Suppose that for all $x_{ij} \in I_{ij}$ we have $\rho_{ij}^{(1)}(0,x_{ij}) \geq \rho_{ij}^{(2)}(0,x_{ij})$.
Then there is no first crossing point  $(t_c, x_c)$ between the perturbed solution $\rho_{ij, \epsilon}^{(1)}(t,x_{ij})$ and its non-perturbed counterpart $\rho_{ij}^{(2)}(t,x_{ij})$ such that $t_c \in [0,T]$ and $0 < x_c < L_{ij}$.
\end{lemma}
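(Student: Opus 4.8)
The plan is to eliminate the flow variable so that each edge equation becomes a single scalar quasilinear parabolic PDE, and then to run a pointwise maximum-principle argument at the putative interior crossing point, where the strict perturbation $\epsilon$ supplies exactly the slack needed to reach a contradiction. Substituting the momentum relation $\phi_{ij}=-f_{ij}(t,\rho_{ij},\pp_x\rho_{ij})$ from \eqref{eq:in_dissipation_eq} into the continuity equation \eqref{eq:in_continuity} and expanding $\pp_x f_{ij}$ by the chain rule gives, on each edge,
\[
\pp_t \rho_{ij} = \pp_u f_{ij}(t,\rho_{ij},\pp_x\rho_{ij})\,\pp_x\rho_{ij} + \pp_v f_{ij}(t,\rho_{ij},\pp_x\rho_{ij})\,\pp_{xx}\rho_{ij}.
\]
This equation is parabolic precisely because $\pp_v f_{ij}>0$ by the strict-monotonicity hypothesis on the third argument. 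The perturbed solution $\rho_{ij,\epsilon}^{(1)}$ satisfies the same equation with an added source term $+\epsilon$ on the right-hand side, inherited from \eqref{perturbed1}.

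Next I would argue by contradiction. Suppose an interior first crossing point $(t_c,x_c)$ with $0<x_c<L_{ij}$ exists, and set $w(t,x)=\rho_{ij,\epsilon}^{(1)}(t,x)-\rho_{ij}^{(2)}(t,x)$. From the supremum definition \eqref{eq:crossing_time} of $t_c$ together with continuity of $w$, one has $w(t_c,x)\geq 0$ for every $x\in I_{ij}$, while \eqref{eq:positive_derivative} forces $w(t_c,x_c)=0$ and $w(t,x_c)<0$ for $t\in(t_c,t_c+\delta)$; in particular the one-sided time derivative obeys $\pp_t w(t_c,x_c)\leq 0$. Since $x_c$ is an interior minimizer of $w(t_c,\cdot)$ attaining the value $0$, the first- and second-order spatial optimality conditions yield $\pp_x w(t_c,x_c)=0$ and $\pp_{xx} w(t_c,x_c)\geq 0$.

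The crux is that at $(t_c,x_c)$ the two solutions share state and spatial gradient: $w=0$ gives $\rho_{ij,\epsilon}^{(1)}=\rho_{ij}^{(2)}$ and $\pp_x w=0$ gives $\pp_x\rho_{ij,\epsilon}^{(1)}=\pp_x\rho_{ij}^{(2)}$, so the arguments of $f_{ij}$ coincide for both solutions and no mean-value (integral) coefficients are required. Subtracting the two scalar PDEs and evaluating at $(t_c,x_c)$ therefore collapses to
\[
\pp_t w(t_c,x_c) = \pp_v f_{ij}\bigl(t_c,\rho_{ij}^{(2)},\pp_x\rho_{ij}^{(2)}\bigr)\,\pp_{xx}w(t_c,x_c) + \epsilon \;\geq\; \epsilon \;>\; 0,
\]
using $\pp_v f_{ij}>0$ and $\pp_{xx}w(t_c,x_c)\geq 0$. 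This contradicts $\pp_t w(t_c,x_c)\leq 0$, so no interior first crossing point can exist, which is the claim.

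I expect the main obstacle to lie in rigorously justifying the sign conditions at $(t_c,x_c)$: confirming that the supremum definition of $t_c$ genuinely yields $w(t_c,\cdot)\geq 0$ with $w(t_c,x_c)=0$, that interiority legitimately delivers the vanishing gradient and nonnegative second derivative in $x$, and that the one-sided time-derivative bound follows from \eqref{eq:positive_derivative} (all of which rely on the $C^2$ regularity guaranteed by Assumption~\ref{assume}). The role of the perturbation $\epsilon$ is essential here: without the strict source the analogous computation would only give $\pp_t w(t_c,x_c)\geq 0$, which does not contradict $\pp_t w(t_c,x_c)\leq 0$, whereas the term $+\epsilon$ converts this into a strict inequality that closes the argument.
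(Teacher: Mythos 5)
Your proof is correct and takes essentially the same route as the paper's: both arguments evaluate the difference of the two solutions at a hypothetical interior crossing point, invoke the first- and second-order conditions for an interior minimum plus the one-sided time-derivative bound from \eqref{eq:positive_derivative}, expand $\pp_x f_{ij}$ by the chain rule so the gradient terms cancel, and let the $+\epsilon$ source force a strict inequality that contradicts the sign conditions. The only cosmetic difference is that you first collapse each edge to a single quasilinear parabolic equation and subtract, while the paper substitutes the dissipation relation into the continuity equation directly at the crossing point; the inequalities are identical (and your rearrangement in fact only needs $\pp_v f_{ij}\geq 0$, whereas the paper divides by $\pp_v f_{ij}>0$).
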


\begin{lemma} \label{lem:nodal_cross_equiv}
Let $j \in \cV$.  Suppose that a first crossing occurs at $(t_c, x_c=0)$ on an edge $(j,k)\in\cE$ for one $k\in\partial_- j$ or at $(t_c, x_c=L_{ij})$ on an edge $(i,j)\in\cE$ for one $i\in\partial_+ j$.  Then a first crossing point occurs at $(t_c, x_c=0)$ for all edges $(j,k)\in\cE$ with $k\in\partial_- j$ and also at $(t_c, x_c=L_{ij})$ for all edges $(i,j)\in\cE$ with $i\in\partial_+ j$.
\end{lemma}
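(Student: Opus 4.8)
The plan is to exploit the fact that, at a fixed vertex $j$, the density values at the endpoints of all adjoining edges are pinned to the single internal nodal density $\rho_j(t)$ through the monotone compatibility functions of \eqref{eq:in_pressure_comp}. Thus an ordering violation at any one adjoining endpoint is equivalent to an ordering violation of the nodal densities $\rho_j^{(1)}(t)$ and $\rho_j^{(2)}(t)$, which then forces a violation at every endpoint adjoining $j$. Since the crossing time $t_c$ is a global quantity that is unique by Definition~\ref{def:first_crossing}, it is automatically shared by all the claimed crossings, and only the local strict inequality \eqref{eq:positive_derivative} needs to be transported from the given endpoint to the others.

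First I would treat the case where the hypothesized crossing is at $(t_c, x_c=0)$ on an outgoing edge $(j,k)$ with $k\in\partial_- j$; the incoming case at $(t_c, x_c=L_{ij})$ is symmetric. By \eqref{eq:positive_derivative} there is a $\delta>0$ with $\rho_{jk}^{(1)}(t,0) < \rho_{jk}^{(2)}(t,0)$ for $t\in(t_c,t_c+\delta)$. Invoking the compatibility relation $\rho_{jk}(t,0)=\underline{\rho}_{jk}(t)=\underline{\alpha}_{jk}(t,\rho_j(t))$ from \eqref{eq:in_pressure_comp} for each of the two solutions, this becomes $\underline{\alpha}_{jk}(t,\rho_j^{(1)}(t)) < \underline{\alpha}_{jk}(t,\rho_j^{(2)}(t))$, and since $\underline{\alpha}_{jk}(t,\cdot)$ is increasing for each fixed $t$ its contrapositive yields $\rho_j^{(1)}(t) < \rho_j^{(2)}(t)$ for all $t\in(t_c,t_c+\delta)$. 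This inequality refers only to the nodal density at $j$ and is therefore independent of the edge originally selected.

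Next I would push this nodal crossing back out to every adjoining endpoint. For any $k'\in\partial_- j$, applying the strictly increasing map $\underline{\alpha}_{jk'}(t,\cdot)$ to $\rho_j^{(1)}(t)<\rho_j^{(2)}(t)$ gives $\rho_{jk'}^{(1)}(t,0)<\rho_{jk'}^{(2)}(t,0)$ on the same interval $(t_c,t_c+\delta)$, and similarly for any $i\in\partial_+ j$ the relation $\overline{\rho}_{ij}(t)=\overline{\alpha}_{ij}(t,\rho_j(t))$ together with strict monotonicity of $\overline{\alpha}_{ij}(t,\cdot)$ gives $\rho_{ij}^{(1)}(t,L_{ij})<\rho_{ij}^{(2)}(t,L_{ij})$ there. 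Because the crossing time is unchanged and equal to the global value $t_c$, condition \eqref{eq:crossing_time} of Definition~\ref{def:first_crossing} holds automatically at each such endpoint and condition \eqref{eq:positive_derivative} has just been verified, so each of these endpoints is a genuine first crossing point.

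The step I expect to carry the real content is the direction of the monotonicity argument. Recovering the nodal inequality from the given endpoint inequality needs only that $\underline{\alpha}_{jk}(t,\cdot)$ is nondecreasing, via the contrapositive, whereas propagating the nodal inequality out to the remaining endpoints genuinely requires the compatibility functions to be \emph{strictly} increasing in $\rho$, so that strict ordering is preserved rather than collapsed to equality; I would therefore lean on the strict monotonicity assumed for $\underline{\alpha}_{ij},\overline{\alpha}_{ij}$. Everything else should be immediate, since all endpoint densities at $j$ are rigid functions of the single variable $\rho_j(t)$ pointwise in $t$, so the same $\delta$ serves every adjoining edge and no further shrinking is required.
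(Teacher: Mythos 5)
Your proposal is correct and follows essentially the same route as the paper: the paper's proof composes one compatibility function with the inverse of another, $\overline{\alpha}_{ij}(t,\underline{\alpha}_{jk,t}^{-1}(\cdot))$, which is exactly your two-step passage from the given endpoint through the nodal density $\rho_j(t)$ and back out to the other endpoints, relying on the same strict monotonicity (hence invertibility) of $\underline{\alpha}_{jk}$ and $\overline{\alpha}_{ij}$ in their second argument.
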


\begin{lemma} \label{lem:perturbed_vertex}
Let $j \in \cV$. Suppose that for all $k \in \partial_- j$ and for all $x_{jk} \in I_{jk}$ we have $\rho_{jk}^{(1)}(0,x_{jk}) \geq \rho_{jk}^{(2)}(0,x_{jk})$. Further suppose that $q_j^{(1)}(t) \geq q_j^{(2)}(t)$.
Then there is no first crossing point $(t_c, x_c)$ between the perturbed solution $\rho_{jk, \epsilon}^{(1)}(t,x_{jk})$ and $\rho_{jk}^{(2)}(t,x_{jk})$ such that $t_c \in [0,T]$ and $x_c = 0$ for any $k\in\partial_-j$.
\end{lemma}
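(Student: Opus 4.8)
The plan is to argue by contradiction: assume a first crossing point $(t_c, x_c = 0)$ exists on some outgoing edge $(j,k_0)$ with $k_0 \in \partial_- j$, and then show that the ordering is in fact strictly increasing in time at that point, contradicting \eqref{eq:positive_derivative}. Throughout I would write $w_{jk}(t,x) = \rho_{jk,\epsilon}^{(1)}(t,x) - \rho_{jk}^{(2)}(t,x)$ and note that eliminating $\phi$ from \eqref{eq:in_continuity}--\eqref{eq:in_dissipation_eq} turns the edge dynamics into the quasilinear parabolic equation $\pp_t\rho = \pp_x f_{jk}(t,\rho,\pp_x\rho) = f_u\,\pp_x\rho + f_v\,\pp_{xx}\rho$, where $f_u, f_v$ denote the partial derivatives of $f_{jk}$ in its second and third arguments and $f_v > 0$ by the strict monotonicity hypothesis; for the perturbed solution this equation carries an extra source $+\epsilon$. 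First I would record the geometric content of $t_c$ being a first crossing time: since $w_{jk}\ge 0$ on all of $\cE$ for every $t\le t_c$, and since by Lemma~\ref{lem:nodal_cross_equiv} the crossing at $x_c=0$ on $(j,k_0)$ forces a simultaneous crossing at every endpoint incident to $j$, we obtain $w_{jk}(t_c,0)=0$ for all $k\in\partial_- j$ and $w_{ij}(t_c,L_{ij})=0$ for all $i\in\partial_+ j$, each being a one-sided spatial minimum of the nonnegative profile $w(t_c,\cdot)$. Hence $\pp_x w_{jk}(t_c,0)\ge 0$ and $\pp_x w_{ij}(t_c,L_{ij})\le 0$.

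Next I would convert these gradient inequalities into flow inequalities. Because all boundary densities at $j$ agree at $t_c$ (they equal the single nodal value through the increasing compatibility maps $\underline{\alpha},\overline{\alpha}$), and because $\underline{\phi}_{jk}=-f_{jk}(t,\underline{\rho}_{jk},\pp_x\rho_{jk}(\cdot,0))$ with $f_{jk}$ strictly increasing in its last argument, the sign of $\pp_x w$ transfers directly, giving $\underline{\phi}_{jk,\epsilon}^{(1)}(t_c)\le \underline{\phi}_{jk}^{(2)}(t_c)$ on outgoing edges and $\overline{\phi}_{ij,\epsilon}^{(1)}(t_c)\ge \overline{\phi}_{ij}^{(2)}(t_c)$ on incoming edges. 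Writing the nodal continuity equation \eqref{eq:in_nodal_continuity} for each solution and subtracting yields
\begin{align}
\big[q_j^{(1)}-q_j^{(2)}\big] + \sum_{i\in\partial_+ j}\big[\overline{\phi}_{ij,\epsilon}^{(1)}-\overline{\phi}_{ij}^{(2)}\big] + \sum_{k\in\partial_- j}\big[\underline{\phi}_{jk}^{(2)}-\underline{\phi}_{jk,\epsilon}^{(1)}\big] = 0, \nonumber
\end{align}
where, by the hypothesis $q_j^{(1)}\ge q_j^{(2)}$ and the flow inequalities just derived, every bracketed term is nonnegative. A sum of nonnegative terms vanishing forces each to vanish; in particular $\underline{\phi}_{jk,\epsilon}^{(1)}(t_c)=\underline{\phi}_{jk}^{(2)}(t_c)$, which by strict monotonicity of $f_{jk}$ in its third argument upgrades the inequality to the exact equality $\pp_x w_{jk}(t_c,0)=0$ for every $k\in\partial_- j$.

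With the gradient equality in hand, the argument closes as in the interior case of Lemma~\ref{lem:perturbed_interior}, but now at the boundary point $x=0$. There the two solutions share the same density and the same spatial gradient, so the coefficients $f_u, f_v$ coincide for both, and subtracting the two parabolic equations gives $\pp_t w_{jk} = f_u\,\pp_x w_{jk} + f_v\,\pp_{xx} w_{jk} + \epsilon$ at $(t_c,0)$. Using the $C^2$ regularity from Assumption~\ref{assume} together with $w_{jk}(t_c,\cdot)\ge 0$ and $w_{jk}(t_c,0)=\pp_x w_{jk}(t_c,0)=0$, a Taylor expansion forces $\pp_{xx} w_{jk}(t_c,0)\ge 0$; combined with $f_v>0$ and $\pp_x w_{jk}(t_c,0)=0$ this yields $\pp_t w_{jk}(t_c,0)\ge \epsilon>0$. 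Thus $w_{jk}(\cdot,0)$ is strictly increasing through its zero at $t_c$, so $w_{jk}(t,0)>0$ for $t$ slightly larger than $t_c$, contradicting \eqref{eq:positive_derivative}.

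I expect the main obstacle to be exactly the step that separates this vertex lemma from the interior Lemma~\ref{lem:perturbed_interior}: at an endpoint one only obtains the one-sided inequality $\pp_x w\ge 0$, which is too weak to sign $\pp_t w$, since the advective term $f_u\,\pp_x w$ has uncontrolled sign. The entire argument therefore rests on the Kirchhoff balance, which is the only place the injection hypothesis $q_j^{(1)}\ge q_j^{(2)}$ and the coupling of all edges incident to $j$ (through Lemma~\ref{lem:nodal_cross_equiv} and the compatibility maps) enter, and which is what promotes the inequality to $\pp_x w=0$ so that the $\epsilon$-perturbed parabolic comparison applies. A secondary point needing care is verifying that the perturbed solution satisfies the same nodal continuity and compatibility relations as the unperturbed one, so that the displayed balance equation is legitimate.
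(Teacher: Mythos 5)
Your proof is correct, and it reaches the contradiction by a genuinely different logical organization than the paper, even though it deploys the same raw ingredients (Lemma~\ref{lem:nodal_cross_equiv}, strict monotonicity of $f_{ij}$ in $v$, the Kirchhoff balance, and the $\epsilon$-perturbed parabolic comparison). The paper performs a case split at the crossing endpoint: either the spatial gradients of the two solutions agree there (its \emph{Option 1}, killed by repeating the interior argument of Lemma~\ref{lem:perturbed_interior}), or the gradient inequality is strict (its \emph{Option 2}, killed by propagating strict flow inequalities to every edge incident to $j$ and contradicting $q_j^{(1)}\geq q_j^{(2)}$ via the nodal balance). You instead avoid the dichotomy entirely: you extract only the \emph{weak} one-sided gradient inequalities that come for free from the crossing being a one-sided minimum at every incident endpoint, convert them to weak flow inequalities, and then let the subtracted Kirchhoff balance — a vanishing sum of nonnegative brackets — force exact flow equality, which strict monotonicity of $f$ in $v$ upgrades to exact gradient equality; only then do you run the parabolic $\epsilon$-argument once. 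This buys two things. First, it repairs a genuine looseness in the paper's Option 2 branch: the paper asserts that the strict gradient inequality \eqref{eq:cpv_inequality} holds on \emph{all} incident edges "by Lemma~\ref{lem:nodal_cross_equiv}", but that lemma only transfers the density crossing, not gradient information, and the paper's stated orientation for incoming edges ($\partial_x \rho^{(1)}_{ij,\epsilon}>\partial_x\rho^{(2)}_{ij}$ at $x=L_{ij}$, hence $\overline{\phi}^{(1)}_{ij}<\overline{\phi}^{(2)}_{ij}$) has the sign backwards relative to \eqref{eq:in_nodal_continuity} — a slip that is silently compensated by a second sign slip in its final Kirchhoff sum. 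Your "all brackets vanish" argument is orientation-robust and needs no per-edge case analysis. Second, your identification of the real obstacle (the advective term $f_u\,\partial_x w$ being unsignable from one-sided information alone) correctly isolates why the vertex case cannot be handled by the interior argument directly. What the paper's route buys in exchange is a marginally shorter contradiction in the strict-inequality case (one strict flow inequality suffices). Both proofs share the same implicit assumption, which you rightly flag: the perturbed solution must satisfy the unperturbed nodal conditions \eqref{eq:in_nodal_continuity} and \eqref{eq:in_pressure_comp}, which is how the paper intends Assumption~\ref{assume}(iv) to be read.
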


\begin{proof}[Proof of Lemma~\ref{lem:perturbed_interior}]
Suppose for the sake of contradiction that there exists a first crossing point $(t_c, x_c)$ such that $0  < x_c < L_{ij}$. Then by Definition \ref{def:first_crossing},
\begin{align}
\rho_{ij, \epsilon}^{(1)}(t_c, x_c) &= \rho_{ij}^{(2)}(t_c, x_c), \label{cp_1} \\
\rho_{ij, \epsilon}^{(1)}(t_c, x) &\geq \rho_{ij}^{(2)}(t_c, x), \ x \in (0,L_{ij}). \label{cp_2}
\end{align}
By Assumption \ref{assume}, the functions $\rho_{ij, \epsilon}^{(1)}(t_c, x)$ and $\rho_{ij}^{(2)}(t_c, x)$ and hence the function $g: (0,L_{ij}) \rightarrow \mathbb{R}$ given by
\begin{align} \label{eq:g_fun}
g(x) &= \rho_{ij, \epsilon}^{(1)}(t_c, x)  - \rho_{ij}^{(2)}(t_c, x)
\end{align}
is in $C^k$. Combined with \eqref{cp_1}-\eqref{cp_2}, this means the function $g(.)$ must also satisfy
\begin{align}
\ppx{x}g(x_c) = 0, \quad \frac{\partial}{\partial x^2}g(x_c) \geq 0, \label{cp_3}
\end{align}
which in turn yields
\begin{align}
\partial_x \rho_{ij, \epsilon}^{(1)}(t_c, x_c) &= \partial_x\rho_{ij}^{(2)}(t_c, x_c), \label{eq:cp_equality} \\
 \partial_x^2 \rho_{ij, \epsilon}^{(1)}(t_c, x_c) &\geq \partial_x^2 \rho_{ij}^{(2)}(t_c, x_c). \label{eq:cp_ineq1}
\end{align}
See Figure~\ref{fig:crossing} for a pictorial interpretation of the relations \eqref{eq:cp_equality} and \eqref{eq:cp_ineq1}.

\begin{figure}[t]
\centering
\includegraphics[width=1.0\linewidth]{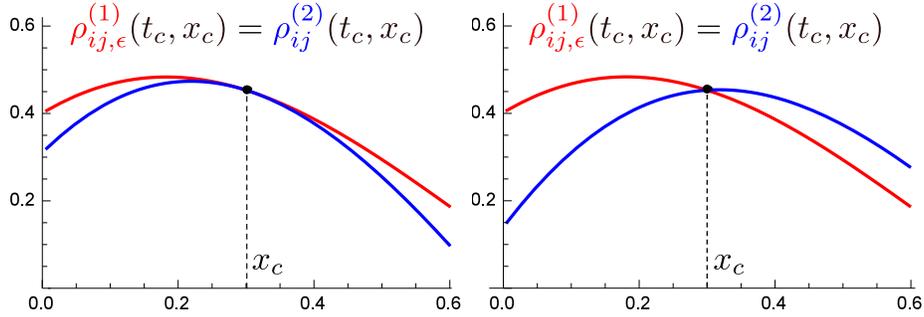} \vspace{-2ex} \caption{Left Figure: example of a first crossing point at $x_c$. Right Figure: example of a crossing point at $x_c$ that is not a first crossing point.  Relations \eqref{eq:cp_equality} and \eqref{eq:cp_ineq1} are satisfied in the left Figure but not in the right Figure.
} \label{fig:crossing} \vspace{2ex}
\end{figure}

Further, \eqref{eq:positive_derivative} implies that $\partial_t \rho_{ij, \epsilon}^{(1)}(t_c, x_c) \leq \partial_t \rho_{ij}^{(2)}(t_c, x_c)$, so that applying the continuity equation \eqref{eq:in_continuity} and its perturbed version \eqref{perturbed1}, we obtain
\begin{align}
-  \pp_x\phi_{ij,\epsilon}^{(1)}(t_c,x_c)  + \epsilon \leq - \pp_x\phi_{ij}^{(2)}(t_c,x_c). \label{eq:pert_comp}
\end{align}
We substitute for the flow terms in \eqref{eq:pert_comp} using the dissipation equation \eqref{eq:in_dissipation_eq} and its perturbed counterpart \eqref{perturbed2} to obtain the relation
\begin{align}
 \partial_x &f_{ij}(t_c,\rho_{ij, \epsilon}^{(1)}(t,x_{c}), \partial_{x}\rho_{ij, \epsilon}^{(1)}(t,x_{c})) \nonumber \\
   &\leq \partial_x  f_{ij}(t,\rho_{ij}^{(2)}(t,x_{c}), \partial_{x}\rho_{ij}^{(2)}(t,x_{c})) - \epsilon \nonumber \\
   &<  \partial_x  f_{ij}(t,\rho_{ij}^{(2)}(t,x_{c}), \partial_{x}\rho_{ij}^{(2)}(t,x_{c})). \label{eq:cp_ineq2}
\end{align}
Using the chain rule for differentiation, we can rewrite for $k = 1,2$,
\begin{align}
\!\!\! \partial_x &f_{ij}(t_c,\rho_{ij}^{(k)}(t,x_{c}), \partial_{x}\rho_{ij}^{(k)}(t,x_{c}))  \nonumber \\
\!\!\! = & \,\partial_u f_{ij}(t_c,\rho_{ij}^{(k)}(t,x_{c}), \partial_{x}\rho_{ij}^{(k)}(t,x_{c})) \partial_x \rho_{ij}^{(k)}(t,x_{c}) \nonumber\\
\!\!\!  &+  \partial_v f_{ij}(t_c,\rho_{ij}^{(k)}(t,x_{c}), \partial_{x}\rho_{ij}^{(k)}(t,x_{c})) \partial_x^2 \rho_{ij}^{(k)}(t,x_{c}). \label{eq:fx_ineq0}
\end{align}
Substituting \eqref{eq:fx_ineq0} into \eqref{eq:cp_ineq2} and using \eqref{cp_1} and \eqref{eq:cp_equality}, we get
\begin{align}
\!\!\!\!\!\!\!& \partial_v f_{ij}(t_c,\rho_{ij, \epsilon}^{(1)}(t,x_{c}), \partial_{x}\rho_{ij, \epsilon}^{(1)}(t,x_{c})) \partial_x^2 \rho_{ij, \epsilon}^{(1)}(t,x_{c})  \nonumber\\
   & \quad < \partial_v f_{ij}(t_c,\rho_{ij}^{(2)}(t,x_{c}), \partial_{x}\rho_{ij}^{(2)}(t,x_{c})) \partial_x^2 \rho_{ij}^{(2)}(t,x_{c}). \label{eq:fv_comp1}
\end{align}
Because the dissipation function $f_{ij}(t,u,v)$ is strictly increasing in the third argument $v$, and recalling the equivalence relations \eqref{cp_1} and \eqref{eq:cp_equality}, we have
\begin{align}
\!\!\!\! &\partial_v f_{ij}(t_c,\rho_{ij, \epsilon}^{(1)}(t,x_{c}), \partial_{x}\rho_{ij, \epsilon}^{(1)}(t,x_{c})) \nonumber \\
 & \quad =  \partial_v f_{ij}(t_c,\rho_{c}^{(2)}(t,x_{c}), \partial_{x}\rho_{ij}^{(2)}(t,x_{c})) > 0. \label{eq:fv_comp2}
\end{align}
Finally, the equality and positivity of $\partial_v f_{ij}$ terms in \eqref{eq:fv_comp2} can be used to simplify \eqref{eq:fv_comp1} to yield the simple strict inequality
\begin{align}
\partial_x^2 \rho_{ij}^{(1)}(t,x_{ij}) < \partial_x^2 \rho_{ij}^{(2)}(t,x_{ij}).
\end{align}
This contradicts \eqref{eq:cp_ineq1}, and hence our assumption must be incorrect and the proof of the lemma is complete.
\end{proof}


\begin{proof}[Proof of Lemma~\ref{lem:nodal_cross_equiv}]
Recall that by the compatibility constraints \eqref{eq:in_pressure_comp}, we have for any $k \in \partial_- j$ that $\rho_{jk}(t_c, 0) = \underline{\alpha}_{jk} (t_c,\rho_{j}(t_c))$ 
and for any $i \in \partial_+ j$ that $\rho_{ij}(t_c, L_{ij}) = \overline{\alpha}_{ij} (t_c,\rho_{j}(t_c))$.  Here $\underline{\alpha}_{jk}(t_c,\rho)$ and $\overline{\alpha}_{ij}(t_c,\rho)$ are invertible functions of $\rho$ for all $(i,j),(j,k)\in\cE$ and $\rho>0$, because we have assumed that the compatibility functions are strictly increasing for positive values in the second argument.  Let us then denote by $\underline{\alpha}_{jk,t}^{-1}(\cdot)$ and $\overline{\alpha}_{ij,t}^{-1}(\cdot)$ the inverses of the corresponding functions at the time $t$.  Then, for any $i\in\partial_+ j$ and $k\in\partial_- j$, we have the relations
\begin{align}
\rho_{ij}(t_c, L_{ij})& = \overline{\alpha}_{ij} (t_c,\underline{\alpha}_{jk,t_c}^{-1}(\rho_{jk}(t_c, 0))), \label{eq:bij1}\\
\rho_{jk}(t_c, 0) &= \underline{\alpha}_{jk} (t_c,\overline{\alpha}_{ij,t_c}^{-1}(\rho_{ij}(t_c, L_{ij}))), \label{eq:bij2}
\end{align}
where $\overline{\alpha}_{ij} (t,\underline{\alpha}_{jk,t}^{-1}(\cdot))$ and $\underline{\alpha}_{jk} (t,\overline{\alpha}_{ij,t}^{-1}(\cdot))$ are compositions of invertible increasing functions and therefore bijective and increasing.  As a result, by Definition \ref{def:first_crossing}, the existence of a crossing  point at $(t_c,x_{jk}=0)$ for some $k\in\partial_- j$ implies that there is also a crossing point at $(t_c, x_{jk} = 0)$ for all $k \in \partial_- j$ and also at at $(t_c, x_{ij} = L_{ij})$ for all $i \in \partial_+ j$.
\end{proof}

\begin{proof}[Proof of Lemma~\ref{lem:perturbed_vertex}]
We again seek to reach a contradiction by starting with the assumption that there exists a first crossing point $(t_c,x_c)$ for some $t_c \in [0,T]$ between the quantities  $\rho_{jk, \epsilon}^{(1)}(t, x_{jk})$ and
 $\rho_{jk}^{(2)}(t, x_{jk})$ for some  $k \in \partial_- j$ and $x_{jk}=x_c = 0$. By the definition of first crossing point we must have
\begin{align}
\rho_{jk, \epsilon}^{(1)}(t_c, 0) &= \rho_{jk}^{(2)}(t_c, 0), \label{cpv_1} \\
\rho_{jk, \epsilon}^{(1)}(t_c, x) &\geq \rho_{jk}^{(2)}(t_c, x), \ x \in [0,L_{jk}]. \label{cpv_2}
\end{align}
We then can apply a similar argument as that used in the proof of Lemma \ref{lem:perturbed_interior}.  Because $\rho_{jk}^{(1)}(t_c, x)  - \rho_{jk}^{(2)}(t_c, x)$ is in $C^k$,
one of the following options must be true.
\begin{itemize}
\item \emph{Option 1:}
\begin{align}
\partial_x \rho_{jk, \epsilon}^{(1)}(t_c, 0) &= \partial_x\rho_{jk}^{(2)}(t_c, 0),  \\
 \partial_x^2 \rho_{jk, \epsilon}^{(1)}(t_c, 0) &\geq \partial_x^2 \rho_{jk}^{(2)}(t_c, 0).
\end{align}
\item \emph{Option 2:}
\begin{align}
\partial_x \rho_{jk, \epsilon}^{(1)}(t_c, 0) &> \partial_x\rho_{jk}^{(2)}(t_c, 0). \label{eq:cpv_inequality}
\end{align}
\end{itemize}

By following the exact same argument as in the proof of Lemma~\ref{lem:perturbed_interior},  we can show that \emph{Option 1} leads to a contradiction. What remains is to prove that \emph{Option 2} is also disallowed.  Because $f_{jk}(t,u,v)$ is strictly increasing in $v$, using \eqref{cpv_1} and \eqref{eq:cpv_inequality} we see that
\begin{align}
f_{jk}(t_c, \rho_{jk}^{(1)}(t_c,0), &\partial_x \rho_{jk}^{(1)}(t_c,0))  \nonumber\\
& > f_{jk}(t_c, \rho_{jk}^{(2)}(t_c,0), \partial_x \rho_{jk}^{(2)}(t_c,0)). \label{eq:pert_ineq1}
\end{align}
Applying Lemma \ref{lem:nodal_cross_equiv} to edges outgoing from node $j$ we find that \eqref{cpv_1} and \eqref{eq:cpv_inequality} hold for all $k \in \partial_- j$, hence so does \eqref{eq:pert_ineq1}.  Combining with the dissipation equation \eqref{eq:in_dissipation_eq}, this gives for all $k \in \partial_- j$  that $\phi_{jk}^{(1)}(t_c, 0) < \phi_{jk}^{(2)}(t_c, 0)$.  Similarly, Lemma \ref{lem:nodal_cross_equiv} implies that the relations $\rho_{ij, \epsilon}^{(1)}(t_c, L_{ij}) = \rho_{ij}^{(2)}(t_c, L_{ij})$ and $\partial_x \rho_{ij, \epsilon}^{(1)}(t_c, L_{ij}) > \partial_x\rho_{ij}^{(2)}(t_c, L_{ij})$ must hold for all $i\in\partial_+ j$, and hence $\phi_{ij}^{(1)}(t_c, L_{ij}) < \phi_{ij}^{(2)}(t_c, L_{ij})$ hold for all $i \in \partial_+ j$ as well.  We then apply the flow conservation equation \eqref{eq:in_nodal_continuity} to obtain
\begin{align}
q^{(1)}(t_c) &= \sum_{k \in \partial_- j} \phi_{jk}^{(1)}(t_c, 0) +  \sum_{i \in \partial_+ j} \phi_{ij}^{(1)}(t_c, L_{ij}) \nonumber \\
& <   \sum_{k \in \partial_- j} \phi_{jk}^{(2)}(t_c, 0) +  \sum_{i \in \partial_+ j} \phi_{ij}^{(2)}(t_c, L_{ij})  \nonumber \\
&= q^{(2)}(t_c).
\end{align}
The last statement is in contradiction with the assumptions of Lemma~\ref{lem:perturbed_vertex}.
\end{proof}

Before proceeding to the proof of Theorem~\ref{theorem} we state one last lemma that relates the solution of the perturbed system in  \eqref{perturbed1}-\eqref{perturbed2}  to the original system.
\begin{lemma} \label{rem:perturbed_ordering}
The solution to the perturbed system $\rho_{ij,\epsilon}^{(1)}(t,x_{ij})$ is always greater than or equal to the solution $\rho_{ij}^{(1)}(t,x_{ij})$ of the original system for all $t \in [0,T]$.
\end{lemma}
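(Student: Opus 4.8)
The plan is to recognize that this ordering statement is itself a \emph{no-crossing} assertion, and to prove it by reapplying the three technical lemmas already established---but now taking the \emph{unperturbed} original solution $\rho_{ij}^{(1)}(t,x_{ij})$ to play the role of the comparison solution ``$\rho_{ij}^{(2)}$'' appearing in those lemmas. The crucial observation is that the perturbed solution $\rho_{ij,\epsilon}^{(1)}$ and the unperturbed solution $\rho_{ij}^{(1)}$ are driven by the \emph{same} nodal injection functions $q_i^{(1)}(t)$. Consequently the vertex hypothesis of Lemma~\ref{lem:perturbed_vertex}, namely $q_j^{(1)}(t)\geq q_j^{(2)}(t)$, is met with equality at \emph{every} vertex $j\in\cV$, in contrast to the main theorem where injection ordering is assumed only on $\mathcal{S}$. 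This is precisely what lets us exclude crossings at all vertices of the graph simultaneously.

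First I would check the initial ordering needed to enter the crossing-point framework. By the perturbed initial condition \eqref{eq:in_initial_condition_pert} we have $\rho_{ij,\epsilon}^{(1)}(0,x)=\rho_{ij}^{(1)}(0,x)+\epsilon\geq\rho_{ij}^{(1)}(0,x)$ for all $(i,j)\in\cE$ and $x\in I_{ij}$, so the initial-data hypotheses of Definition~\ref{def:first_crossing}, Lemma~\ref{lem:perturbed_interior}, and Lemma~\ref{lem:perturbed_vertex} hold trivially under the identification $\rho^{(2)}=\rho^{(1)}$. Arguing by contradiction, I would then suppose the claimed ordering fails somewhere in $[0,T]$, so that a first crossing point $(t_c,x_c)$ between $\rho_{ij,\epsilon}^{(1)}$ and $\rho_{ij}^{(1)}$ exists with $t_c\in(0,T]$.

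Next I would dispose of every possible location of $x_c$. If $0<x_c<L_{ij}$, Lemma~\ref{lem:perturbed_interior} rules the crossing out directly. If $x_c=0$ on some outgoing edge $(j,k)$ with $k\in\partial_-j$, Lemma~\ref{lem:perturbed_vertex} applies---its vertex hypothesis $q_j^{(1)}(t)\geq q_j^{(1)}(t)$ holds with equality---and produces a contradiction. Finally, if $x_c=L_{ij}$ on some incoming edge $(i,j)$ with $i\in\partial_+j$, then Lemma~\ref{lem:nodal_cross_equiv} converts this into a crossing at $x_c=0$ on every outgoing edge of $j$, which is again excluded by Lemma~\ref{lem:perturbed_vertex}. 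Since the interiors, the left endpoints, and the right endpoints of all edges are thereby covered, no first crossing point can exist anywhere on $\Gamma$. By the remark following Definition~\ref{def:first_crossing}, the absence of a first crossing point on $[0,T]$ forces the initial ordering to persist, giving $\rho_{ij,\epsilon}^{(1)}(t,x_{ij})\geq\rho_{ij}^{(1)}(t,x_{ij})$ for all $(i,j)\in\cE$, $x_{ij}\in I_{ij}$, and $t\in[0,T]$.

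I expect the only delicate point to be bookkeeping rather than analysis. One must confirm that the comparison solution in Lemmas~\ref{lem:perturbed_interior}--\ref{lem:perturbed_vertex} may legitimately be an unperturbed solution of the \emph{original} system---which $\rho_{ij}^{(1)}$ is, as it satisfies \eqref{eq:in_continuity}--\eqref{eq:in_dissipation_eq} and the flow-conservation law with injections $q_i^{(1)}$---and that matching the injections is exactly what upgrades the single-vertex hypothesis of Lemma~\ref{lem:perturbed_vertex} to a statement valid at all vertices. The $+\epsilon$ source term causes no difficulty here, since the technical lemmas were constructed precisely to absorb it.
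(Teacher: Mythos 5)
Your proposal is correct and takes essentially the same route as the paper: the paper's proof likewise observes that the hypotheses of Lemma~\ref{lem:perturbed_interior} and Lemma~\ref{lem:perturbed_vertex} remain satisfied when $\rho_{ij}^{(2)}$ is replaced by $\rho_{ij}^{(1)}$ (identical injections $q_i^{(1)}$, initial ordering supplied by the $+\epsilon$ shift), so no first crossing point can exist and the ordering persists on $[0,T]$. You simply make explicit the edge-location case analysis (interior via Lemma~\ref{lem:perturbed_interior}, left endpoints via Lemma~\ref{lem:perturbed_vertex}, right endpoints via Lemma~\ref{lem:nodal_cross_equiv}) that the paper compresses into a single sentence.
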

\begin{proof}
We observe that all assumptions in Lemma~\ref{lem:perturbed_interior} and Lemma~\ref{lem:perturbed_vertex} are satisfied if we replace $\rho_{ij}^{(2)}(t,x_{ij})$ by  $\rho_{ij}^{(1)}(t,x_{ij})$.
As a consequence, there can be no first crossing point between $\rho_{ij,\epsilon}^{(1)}(t,x_{ij})$ and $\rho_{ij}^{(1)}(t,x_{ij})$. Therefore, for all $t \in [0,T]$ we must have $\rho_{ij,\epsilon}^{(1)}(t,x_{ij}) \geq \rho_{ij}^{(1)}(t,x_{ij})$.
\end{proof}
\vspace{0.1in}

\subsection{Proof of the Main Result}

\begin{proof}[Proof of Theorem~\ref{theorem}]
Fix an $\epsilon > 0$.  Let $\rho_{ij, \epsilon}^{(1)}(t,x_{ij})$ be the solution to the perturbed system in  \eqref{perturbed1}-\eqref{perturbed2}
with nodal input parameters set at $q_{i}^{(1)}(t)$. Then by Lemma~\ref{lem:perturbed_interior}, there can be no first crossing point between $\rho_{ij, \epsilon}^{(1)}(t,x_{ij})$ and $\rho_{ij}^{(2)}(t,x_{ij})$ such that $t_c \in [0,t_0]$ and $0 < x_{ij} < L_{ij}$
for some $(i,j) \in \cE$. The above statement is also true by Lemma~\ref{lem:perturbed_vertex} for $i \in \mathcal{S}$.
Further, by Lemma~\ref{rem:perturbed_ordering}, we have that $\rho_{i}^{(1)}(t) \geq \rho_{i}^{(2)}(t)$ implies $\rho_{i, \epsilon}^{(1)}(t) \geq \rho_{i}^{(2)}(t)$ and hence there is no crossing point at $i \notin \mathcal{S}$. This means for all $t \in [0,t_0]$ we have
\begin{align}
\rho_{ij, \epsilon}^{(1)}(t,x_{ij}) \geq \rho_{ij}^{(2)}(t,x_{ij}), \label{eq:main_ineq}
\end{align}
for all $(i,j) \in \cE$ and $x_{ij} \in I_{ij}$. Because $\epsilon > 0$ was chosen arbitrarily, we can take the limit $\epsilon \rightarrow 0$ in \eqref{eq:main_ineq}, and using Assumption~\ref{assume}-(iv), we get
\begin{align}
\rho_{ij}^{(1)}(t,x_{ij}) \geq \rho_{ij}^{(2)}(t,x_{ij}), \label{eq:main_ineq}
\end{align}
This completes the proof of the Main Theorem.
\end{proof}

\section{Discussion and Applications} \label{sec:application}

The MOP properties established in Sections \ref{sec:formulation} and \ref{sec:proof} have several important interpretations for understanding the possible robust optimal control formulations for parabolic PDEs on metric graphs. We consider robust control formulations where the nodal parameter functions $q_j(t)$ are prescribed ahead of time within a compact subset of $C^k[0,T]$, but are uncertain.  Such control formulations appear in problems related to the transportation of commodities over networks, in particular the flow of
compressible fluids such as natural gas in large scale pipeline systems \cite{herty10,zlotnik15cdc}, where the flows are well-described by parabolic systems of PDE of the form \eqref{eq:in_continuity}-\eqref{eq:in_nodal_continuity}.  In order to solve robust optimal control problems for such systems, computationally tractable formulations are essential. Consider the following deterministic optimal control problem:
\begin{align}
\!\!\! \min \,\,\, & \cJ(\rho, \phi, \alpha) = \int_0^T \cL(t,\underline{\phi}(t),\overline{\phi}(t),\underline{\alpha}(t),\overline{\alpha}(t)) dt, \label{eq:det_obj} \\
\!\!\!\mbox{s.t.} \,\,\, & \dS \pp_t\rho_{ij}(t,x_{ij})+\pp_x\phi_{ij}(t,x_{ij})  =  0  \\
\!\!\! &\phi_{ij}(t,x_{ij})+f_{ij}(t,\rho_{ij}(t,x_{ij}), \partial_{x}\rho_{ij}(t,x_{ij}))  =0, \\
\!\!\! &\underline{\rho}_{ij}(t) \!=\! \underline{\alpha}_{ij}\rho_{i}(t), \,\, \overline{\rho}_{ij}(t)  \!=\! \overline{\alpha}_{ij}\rho_{j}(t), \, \fA (i,j)\in\cE \\
\!\!\! &q_j(t)+\sum_{i\in\partial_{+}j}\overline{\phi}_{ij}- \sum_{k\in\partial_{-}j}\underline{\phi}_{jk}=0, \quad \fA j\in\cV \\
\!\!\! & \rho_{min} \leq \rho_{ij}(t,x) \leq \rho_{max}, \,\, \fA (i,j)\in\cE . \label{eq:det_ineq}
\end{align}
In the above formulation, the density compatibility functions are linear with factors $\underline{\alpha}_{ij}$ and $\overline{\alpha}_{ij}$.  We also define $\underline{\phi}(t)=(\underline{\phi}_{\pi^{-1}(1)}(t),\ldots,\underline{\phi}_{\pi^{-1}(E)}(t))\in\bR^E$ and $\overline{\phi}(t)=(\overline{\phi}_{\pi^{-1}(1)}(t),\ldots,\overline{\phi}_{\pi^{-1}(E)}(t)) \in \bR^{E}$, where $\pi:\cE\to[E]$ is a mapping from the set of edges to the integers $1$ to $E$.  Similarly, we define $\underline{\alpha}(t)=(\underline{\alpha}_{\pi^{-1}(1)}(t),\ldots,\underline{\alpha}_{\pi^{-1}(E)}(t))\in\bR^E$ and $\overline{\alpha}(t)=(\overline{\alpha}_{\pi^{-1}(1)}(t),\ldots,\overline{\alpha}_{\pi^{-1}(E)}(t))\in\bR^E$, which form the collection of control functions.

A version of problem \eqref{eq:det_obj}-\eqref{eq:det_ineq} can be formulated such that the solution is robust to uncertain variation in the nodal parameter functions $q_i(t)$ within some known bounds, i.e.,
\begin{align}
q^{(1)}_j(t) \geq q_j(t) \geq q^{(2)}_j(t),  \quad \fA j\in\cV \text{ and } t\in[0,T].  \label{box}
\end{align}
The resulting problem is extremely challenging because of the semi-infinite set of constraints.  Using Theorem~\ref{theorem}, however, we can obtain significantly simplified reformulations of the robust control problem as well as a ``monitoring'' mechanism that we describe in following subsections.

\subsection{Simplified Representation of Robust Optimal Control}

As a consequence of Theorem~\ref{theorem}, we can obtain a reformulation of the semi-infinite constrained robust control problem with interval uncertainty as in \eqref{box} by enforcing feasibility only for the extreme scenarios.  As long as the optimal control solution satisfies the constraints for the two extremal cases of nodal parameter functions $q_j(t)$ for $j\in\cV$, feasibility will also be guaranteed for all nodal parameter functions that are bounded by the extreme scenarios.
We rewrite the entire formulation below for completeness.
\begin{align}
\!\!\! \min \,\,\, & \cJ(\rho, \phi, \alpha) \! =\!\! \int_0^T \!\!\!\!\cL(t,\underline{\phi}(t),\overline{\phi}(t),\underline{\alpha}(t),\overline{\alpha}(t)) dt, \label{eq:rob_obj} \\
\!\!\!\mbox{s.t.} \,\,\, & \dS \pp_t\rho_{ij}(t,x_{ij})+\pp_x\phi_{ij}(t,x_{ij})  =  0 \label{eq:nom_a} \\
\!\!\! &\phi_{ij}\!(t,x_{ij}) \!+\! f_{ij}(t,\rho_{ij}(t,x_{ij}), \partial_{x}\rho_{ij}(t,x_{ij})) \!=\! 0, \\
\!\!\! &\underline{\rho}_{ij}\!(t) \!=\! \underline{\alpha}_{ij}\rho_{i}\!(t), \,\, \overline{\rho}_{ij}\!(t)  \!=\! \overline{\alpha}_{ij}\rho_{j}\!(t), \, \fA \! (\!i\!,\!j\!)\!\in\!\cE \\
\!\!\! &\hat{q}_j(t)+\sum_{i\in\partial_{+}j}\overline{\phi}_{ij}- \sum_{k\in\partial_{-}j}\underline{\phi}_{jk}=0, \quad \fA j\in\cV \label{eq:nom_b} \\
\!\!\! & \dS \pp_t\rho_{ij}^{(1)}(t,x_{ij})+\pp_x\phi_{ij}^{(1)}(t,x_{ij})  =  0 \label{eq:sol1_a} \\
\!\!\! &\phi_{ij}^{(1)}\!(t,x_{ij}) \!+\! f_{ij}(t,\rho_{ij}^{(1)}(t,x_{ij}), \partial_{x}\rho_{ij}^{(1)}(t,x_{ij})) \!=\! 0, \\
\!\!\! &\underline{\rho}_{ij}^{(1)}\!(t) \!=\! \underline{\alpha}_{ij}\rho_{i}^{(1)}\!(t), \,\, \overline{\rho}_{ij}^{(1)}\!(t)  \!=\! \overline{\alpha}_{ij}\rho_{j}^{(1)}\!(t), \, \fA \! (\!i\!,\!j\!)\!\in\!\cE  \\
\!\!\! &q_j^{(1)}(t)+\sum_{i\in\partial_{+}j}\overline{\phi}_{ij}^{(1)}- \sum_{k\in\partial_{-}j}\underline{\phi}_{jk}^{(1)}=0, \quad \fA j\in\cV \label{eq:sol1_b} \\
\!\!\! & \dS \pp_t\rho_{ij}^{(2)}(t,x_{ij})+\pp_x\phi_{ij}^{(2)}(t,x_{ij})  =  0 \label{eq:sol2_a} 
\end{align}
\begin{align}
\!\!\! &\phi_{ij}^{(2)}\!(t,x_{ij}) \!+\! f_{ij}(t,\rho_{ij}^{(2)}(t,x_{ij}), \partial_{x}\rho_{ij}^{(2)}(t,x_{ij})) \!=\! 0, \\
\!\!\! &\underline{\rho}_{ij}^{(2)}\!(t) \!=\! \underline{\alpha}_{ij}\rho_{i}^{(2)}\!(t), \,\, \overline{\rho}_{ij}^{(2)}\!(t)  \!=\! \overline{\alpha}_{ij}\rho_{j}^{(2)}\!(t), \, \fA \! (\!i\!,\!j\!)\!\in\!\cE \\
\!\!\! &q_j^{(2)}(t)+\sum_{i\in\partial_{+}j}\overline{\phi}_{ij}^{(2)}- \sum_{k\in\partial_{-}j}\underline{\phi}_{jk}^{(2)}=0, \quad \fA j\in\cV \label{eq:sol2_b}\\
\!\!\! & \rho_{min} \leq \rho_{ij}^{(1)}(t,x_{ij}) \leq \rho_{max}, \,\, \fA (i,j)\in\cE \label{eq:rob_ineq1} \\
\!\!\! & \rho_{min} \leq \rho_{ij}^{(2)}(t,x_{ij}) \leq \rho_{max}, \,\, \fA (i,j)\in\cE. \label{eq:rob_ineq2}
\end{align}

In the above formulation, equations \eqref{eq:nom_a}-\eqref{eq:nom_b}, \eqref{eq:sol1_a}-\eqref{eq:sol1_b}, and \eqref{eq:sol2_a}-\eqref{eq:sol2_b} represent the physical constraints for the nominal, high injection, and low injection cases, respectively.  For evaluating the objective function $\cJ$, we chose the flow solutions corresponding to the nominal injection profiles $\hat{q}_i(t)$ that are bounded by the extremal envelopes.  We  enforce feasibility of the $\rho_{ij}(t,x_{ij})$ only with respect to the extreme scenarios corresponding to the lower and upper envelopes $q_i^{(1)}(t)$ and $q_i^{(2)}(t)$ of the uncertain nodal parameters $q_i(t)$. By Theorem~\ref{theorem}, as long as $q_j^{(1)}(t) \geq \hat{q}_j(t) \geq q_j^{(2)}(t)$, then the corresponding densities $\rho_{ij}(t,x_{ij})$ must also satisfy $\rho_{ij}^{(1)}(t,x_{ij}) \geq \rho_{ij}(t,x_{ij}) \geq \rho_{ij}^{(2)}(t,x_{ij})$ for all $t\in[0,T]$, and hence the constraints \eqref{eq:rob_ineq1} and \eqref{eq:rob_ineq2} as well.

If the objective function is also monotone with respect to the nodal input parameters, one can also obtain a simplified representation of min-max robust optimal control, as in \cite{vuffray15cdc}.

\subsection{Policy for Real-Time Nodal Monitoring}
We have obtained a tractable formulation for the robust control problem when the uncertainty envelopes of the nodal input parameters are known a priori.  Suppose that there exists a feasible solution to the simplified representation for the robust optimal control problem in \eqref{eq:rob_obj}-\eqref{eq:rob_ineq2}, and that the optimal control vectors $\underline{\alpha}(t)$ and $\overline{\alpha}(t)$ that keep the system feasible under uncertainty have been obtained.  In this section we suggest a simple monitoring mechanism that can be used to react to real-time deviations outside of the predicted uncertainty envelope.  If, for instance, an error in uncertainty quantification causes $q_i(t)$ for some $i \in \cV$ to deviate outside of the predicted envelope $[q_i^{(2)}(t),q_i^{(1)}(t)]$, then feasibility is no longer guaranteed with the control solution $\underline{\alpha}(t)$ and $\overline{\alpha}(t)$ to problem \eqref{eq:rob_obj}-\eqref{eq:rob_ineq2} because the assumptions of
Theorem~\ref{theorem} no longer hold.  One way to compensate for such variation is to enforce the nodal input parameters to the predicted upper or lower bounds, $q_i^{(1)}(t)$ or $q_i^{(2)}(t)$, as appropriate. However, this may be too conservative for enforcing the density constraints \eqref{eq:det_ineq}, because if $q_i(t)$ is not within the expected envelope, then it does not necessarily mean that the density constraints, which are the focus in the motivating applications, are violated. However, Theorem~\ref{theorem} still applies, and this facilitates a much less conservative Nodal Monitoring Policy (NMP) to reactively maintain system densities within feasible bounds.

\vspace{0.1in}
\noindent \emph{ \bf Nodal Monitoring Policy (NMP):} Let $\overline{\cal{S}} \subset \cV$ be the subset of nodes where the upper bound $q_i(t) \leq q_i^{(1)}(t)$ on nodal parameters is violated, and let $\underline{\cal{S}} \subset \cV$ be the subset of nodes where the lower bound $q_i(t) \geq q_i^{(2)}(t)$ is violated. Let $\rho_i^{(1)}(t)$ and $\rho_i^{(2)}(t)$ for $i\in\cV$ be the collections of nodal density solutions corresponding to fixing the nodal input parameters at
$q_i^{(1)}(t)$ and $q_i^{(2)}(t)$, respectively. The policy is to monitor the real-time density profiles $\rho_i(t)$ for $i \in \underline{\cal{S}} \cup \overline{\cal{S}}$. If no crossing points are encountered between the real-time solution $\rho_i(t)$ and the upper and lower density profile solutions $\rho_i^{(1)}(t)$ and $\rho_i^{(2)}(t)$, respectively, then the system is safe with respect to the density limits. Alternatively, suppose we encounter a crossing point at time $t_c$ at node $i \in \overline{\cal{S}}$. The policy then calls to reset the nodal parameter at $i$ to $q_i^{(1)}(t)$ and to leave the rest of the nodal parameters untouched.  It turns out that this simple action is sufficient to guarantee that the system-wide density remains within $\rho_{\min}$ and $\rho_{\max}$. This is contained in the following corollary.

 \begin{corollary}[Sufficiency of Nodal Monitoring Policy] Suppose we implement the NMP described above. Then we have $\rho^{(2)}(t,x_{ij}) \leq \rho(t,x_{ij}) \leq \rho^{(1)}(t, x_{ij})$ for all $t \in [0,T]$ and $(i,j) \in \cE$.
\end{corollary}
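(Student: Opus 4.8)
The plan is to establish the two envelope inequalities $\rho(t,x_{ij})\le\rho^{(1)}(t,x_{ij})$ and $\rho(t,x_{ij})\ge\rho^{(2)}(t,x_{ij})$ separately, exploiting the symmetry between the upper and lower cases so that only the upper bound needs a detailed argument. For the upper bound I would apply Theorem~\ref{theorem} with ``solution $1$'' taken to be the upper-envelope trajectory $\rho^{(1)}$ driven by the injections $q_i^{(1)}$, and ``solution $2$'' taken to be the real-time trajectory $\rho$ driven by the actual (uncertain) injections under the NMP. All three trajectories share the same initial data, so the required initial ordering $\rho^{(1)}(0,\cdot)\ge\rho(0,\cdot)$ holds trivially. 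I would then choose the controlled set of the theorem to be $\mathcal{S}=\cV\setminus\overline{\mathcal{S}}$, namely exactly the nodes at which the upper bound $q_i(t)\le q_i^{(1)}(t)$ is respected, so that the injection hypothesis $q_i^{(1)}(t)\ge q_i(t)$ is satisfied for every $i\in\mathcal{S}$.

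With this identification, the crossing-localization consequence of Theorem~\ref{theorem} tells us that the only way the ordering $\rho^{(1)}\ge\rho$ can first be violated is through a crossing of \emph{nodal} densities at a vertex $i\in\cV\setminus\mathcal{S}=\overline{\mathcal{S}}$; crossings in the interior of an edge (Lemma~\ref{lem:perturbed_interior}) and crossings at favorably ordered vertices (Lemma~\ref{lem:perturbed_vertex}) are excluded. These are precisely the nodes that the NMP monitors. I would then run a continuation argument over crossing events. On the initial subinterval $[0,t_c]$ up to the first crossing time $t_c$ defined as in \eqref{eq:crossing_time}, Theorem~\ref{theorem} applies verbatim and yields $\rho^{(1)}\ge\rho$ throughout, so in particular no reset has yet been triggered. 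At $t_c$ a first crossing occurs, and by Lemma~\ref{lem:nodal_cross_equiv} it manifests simultaneously on every edge incident to a single vertex $i^\ast\in\overline{\mathcal{S}}$, at which $\rho_{i^\ast}(t_c)=\rho_{i^\ast}^{(1)}(t_c)$.

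At this point the NMP resets $q_{i^\ast}\mapsto q_{i^\ast}^{(1)}$, which moves $i^\ast$ into the controlled set. I would then \emph{restart} Theorem~\ref{theorem} on $[t_c,T]$, using the system state at $t=t_c$ as fresh initial data: the whole-graph ordering $\rho^{(1)}(t_c,\cdot)\ge\rho(t_c,\cdot)$ holds because $t_c$ is the first crossing time, and the newly capped injection gives $q_{i^\ast}^{(1)}(t)\ge q_{i^\ast}(t)$ on $[t_c,T]$ with equality, so $i^\ast$ now belongs to $\mathcal{S}$. Because each reset permanently removes a node from $\overline{\mathcal{S}}$ and $\cV$ is finite, only finitely many such crossing events can occur, and stitching the conclusions over the resulting finite partition of $[0,T]$ yields $\rho\le\rho^{(1)}$ on all of $[0,T]$. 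The lower bound $\rho\ge\rho^{(2)}$ follows by the mirror-image argument, taking ``solution $1$'' to be $\rho$ and ``solution $2$'' to be $\rho^{(2)}$, using $\mathcal{S}=\cV\setminus\underline{\mathcal{S}}$, and resetting $q_i\mapsto q_i^{(2)}$ at crossings on $\underline{\mathcal{S}}$.

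The main obstacle I anticipate is the rigorous justification of the restart step: Assumption~\ref{assume} presumes $C^k$ injection functions, whereas the reset introduces a downward jump in the effective injection at $t_c$ (since $i^\ast\in\overline{\mathcal{S}}$ forces $q_{i^\ast}(t_c)>q_{i^\ast}^{(1)}(t_c)$). I would handle this by treating each subinterval as an independent IVP whose control $q_{i^\ast}^{(1)}$ is $C^k$ on $[t_c,T]$ and whose initial data are the smooth (by Assumption~\ref{assume}, item (iii)) traces of the solution at $t_c$, so that Assumption~\ref{assume} holds afresh on each piece and the per-piece invocation of Theorem~\ref{theorem} is legitimate; the discontinuity lives only at the interface between consecutive pieces and never inside a single application. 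A secondary point requiring care is confirming that a single reset at the crossing vertex suffices, which is exactly the content of the first-crossing localization: the violation originates at one vertex and cannot simultaneously nucleate in an edge interior, so capping that vertex restores the hypotheses of the theorem on the next subinterval.
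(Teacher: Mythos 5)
Your proposal is correct and its core move is the same as the paper's—invoke Theorem~\ref{theorem} against each envelope solution, with the controlled set $\mathcal{S}$ taken to be the nodes whose injection bound is respected—but you supply substantially more than the paper, whose entire proof is one sentence: ``a direct application of Theorem~\ref{theorem}, because by construction of the policy, the assumptions in the theorem are satisfied for all $t \in [0,T]$.'' The extra content you provide is genuinely needed for rigor. Theorem~\ref{theorem} is stated for a \emph{fixed} partition $\mathcal{S}$ with the injection ordering holding on the whole interval $[0,t_0]$; once the NMP resets a node $i^\ast\in\overline{\mathcal{S}}$ at a crossing time $t_c$, that node satisfies neither hypothesis on any interval containing $t_c$ in its interior (its injection ordering fails somewhere before $t_c$, and its nodal density ordering is unknown after $t_c$), so a single verbatim application of the theorem on $[0,T]$ does not go through. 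Your restart-at-$t_c$ continuation, with induction on the finitely many reset events, is exactly the device that converts the paper's assertion into an argument, and your point about the jump discontinuity in the reset injection—handled by treating each piece as a fresh IVP whose initial data are the time-$t_c$ traces—flags a real technical issue (Assumption~\ref{assume} asks for $C^k$ nodal inputs) that the paper passes over silently. Two small caveats: first, $i^\ast\in\overline{\mathcal{S}}$ only means the bound $q_{i^\ast}(t)\le q_{i^\ast}^{(1)}(t)$ fails at \emph{some} time, not necessarily at $t_c$, so your parenthetical claim that the reset forces a downward jump at $t_c$ is not quite right, though immaterial since your piecewise treatment covers both cases; second, the upper and lower comparisons are coupled through the resets (a reset to $q_{i}^{(1)}$ triggered by the upper monitor changes the actual injection fed into the lower comparison), but this coupling is benign because $q_i^{(1)}(t)\ge q_i^{(2)}(t)$ keeps any reset node in the injection-ordered set for \emph{both} comparisons—your mirror-image argument should state this explicitly so the two continuations can run over the combined set of reset times.
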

\begin{proof} The proof is a direct application of Theorem~\ref{theorem}, because by construction of the policy, the assumptions in the theorem are satisfied for all $t \in [0,T]$.
\end{proof}

\section{Conclusions} \label{sec:conc}

In this manuscript, we have derived monotone order propagation (MOP) properties for a class of nonlinear parabolic partial differential equation (PDE) systems on metric graphs.  We have established an {\em ab initio} proof that ordering properties of the solution to the initial value problem (IVP) are preserved when the initial conditions and time-varying nodal parameters at vertices are appropriately ordered.  In addition, we proved that when monotone ordering is not preserved, the first crossing of solutions occurs at a graph vertex.  These results have important implications for robust optimal control of subsonic compressible fluid flow with energy dissipation on physical networks subject to uncertainty.  In particular, there exists a direct application to robust dynamic optimization of compressors in large-scale natural gas pipeline networks \cite{zlotnik15cdc,vuffray15cdc,zlotnik16ecc}, and energy systems in general \cite{budivsic12}.   In addition to simplified robust optimal control formulations, we have presented a nodal monitoring policy (NMP) for control of nonlinear parabolic PDE on graphs subject to parameter uncertainty.  The results may also find uses in analysis of vehicle transportation networks \cite{dapice08,ma04,tjandra03phd,gottlich05,gugat05,herty03,como10}, and information systems \cite{como13a,como13b,intanagonwiwat02}.

\section{Acknowledgements}
The authors would like to thank Giacomo Como and Michael Herty for insightful discussions on monotone order properties and PDEs on networks.  This work was carried out under the auspices of the National Nuclear Security Administration of the U.S. Department of Energy at Los Alamos National Laboratory under Contract No. DE-AC52-06NA25396, and was partially supported by DTRA Basic Research Project \#10027-13399 and by the Advanced Grid Modeling Program in the U.S. Department of Energy Office of Electricity.





\bibliographystyle{unsrt}
\bibliography{gas_master,monotonicity_master}

\end{document}